

%
%

\documentclass{amsart}
\usepackage{chngcntr}
\usepackage{apptools}
\usepackage{color}
\usepackage{amsthm,amssymb,verbatim}
\usepackage{graphicx}
\usepackage{enumerate}
\usepackage{chngcntr}
\usepackage{apptools}
\usepackage{color}
\usepackage{amsthm,amssymb,verbatim}
\usepackage{graphicx}
\usepackage{enumerate}


 \newcommand{\Ee}{\mathcal{E}}

  \newcommand{\Ss}{\mathbf{S}}
 
 \newcommand{\RR}{\mathbf{R}}  
 \newcommand{\BB}{\mathbf{B}}  

    \newcommand{\dist}{\operatorname{dist}}

 \newcommand{\Tan}{\operatorname{Tan}}
 
 \newcommand{\Kk}{\mathcal{K}}

\newcommand{\Hh}{\mathcal{H}}
\usepackage{amsthm}

\input epsf
\def\begfig {
\begin{figure}
\small }
\def\endfig {
\normalsize
\end{figure}
}

    \newtheorem{theorem}    {Theorem}   

    \newtheorem*{theorem*}{Theorem}
    \theoremstyle{definition}
    \newtheorem{definition}  [theorem] {Definition}
     
    \theoremstyle{definition}
    \newtheorem{remark}   [theorem]       {Remark}

\title[Sharp Entropy Bounds]{Sharp Entropy Bounds 
for Self-Shrinkers \\
in Mean Curvature Flow}
\author{Or Hershkovits }
\address{Department of Mathematics\\ Stanford University\\ Stanford, CA 94305}
\email{orher@stanford.edu}
\author{Brian White}
\address{Department of Mathematics\\ Stanford University\\ Stanford, CA 94305}
\email{bcwhite@stanford.edu}
\subjclass[2010]{Primary 53C44; Secondary 49Q20.}
\date{March 1, 2018.  Revised October 13, 2018.}
\usepackage{hyperref}
\usepackage{enumerate}
\usepackage[alphabetic, msc-links, backrefs]{amsrefs}
%
%
\begin{document}

\maketitle

\begin{abstract}
Let $M\subset \RR^{m+1}$ be a smooth, closed, codimension-one  self-shrinker (for mean curvature flow)
with nontrivial $k^{\rm th}$ homology.  We show that the entropy
of $M$ is greater than or equal to the entropy of a round $k$-sphere,
and that if equality holds, then $M$ is a round $k$-sphere in $\RR^{k+1}$.
\end{abstract}

\section{introduction}
A properly embedded hypersurface $M\subset \RR^{m+1}$ is called a self-shrinker if $M_t:=\sqrt{-t}M$ for $t\in (-\infty,0)$ is an evolution by mean curvature, i.e., if
\[
(\partial_t x)^{\perp}=\vec{H}(x)
\] 
holds for every $t\in (-\infty,0)$ and $x\in M_t$. Equivalently, $M$ is a self-shrinker if it satisfies
\begin{equation}\label{self_shrink_eq}
\vec{H}+x^{\perp}/2=0.
\end{equation}
  
The study of self-shrinkers is central in the analysis of 
singularity formation of the mean curvature flow (MCF). 
Indeed, every limit of rescalings of a MCF around a fixed point in spacetime is modeled on a 
possibly singular self-shrinker~\cite{Hui_mon,White_stra,Ilm_sur}. 
It is straightforward to check
that a hyperplane through the origin is a self-shrinker,
as is $\Ss^k(\sqrt{2k})$, the $k$-sphere of radius $\sqrt{2k}$ in $\RR^{k+1}$.
Crossing with a plane through the origin leaves  equation~\eqref{self_shrink_eq} unchanged, 
so the cylinder $\Ss^k(\sqrt{2k})\times \RR^{m-k}$ in $\RR^{m+1}$ is  also a self-shrinker. 
We regard spheres as a special cases of cylinders: $\Ss^k=\Ss^k\times \RR^0$.
Although many other self-shrinkers have been constructed \cite{Ang_tor,Kap_shri,Ket_shri},
 Huisken~\cite{Ilm_ques} conjectured that for MCF from generic initial hypersurfaces, 
 all singularities are cylindrical. 
 When the initial hypersurface is mean-convex, 
 all singularities are indeed cylindrical~\cite{Hui_mon, Hui_Sin, White_nature}.
 
In a recent fundamental paper \cite{CM_generic}, Colding and Minicozzi 
 made an important step towards establishing {Huisken's} genericity conjecture. In that paper,
they defined the {\bf Gaussian area} of a hypersurface $M$ in $\RR^{m+1}$ to be
\begin{equation}\label{F_def}
F[M]=\frac{1}{(4\pi)^{m/2}}\int_M e^{-|x|^2/4}d\mathcal{H}^m,
\end{equation}
and they defined its {\bf entropy} to be the {supremum of}
the Gaussian area of all transates and rescalings of $M$:
\begin{equation}\label{ent_def}
\Ee[M]=\sup_{x_0\in \RR^{m+1}, \lambda>0} F[\lambda(x-x_0)].
\end{equation}
Clearly, {Gaussian area} is invariant under rotations, 
and {entropy} is invariant under all rigid motions and rescalings.  
The normalization constant $\frac{1}{(4\pi)^{m/2}}$ in the definition of $F$ 
is {chosen} so that linear hyperplanes have Gaussian area $1$.
It follows that $F[M]=F[M\times \RR]$ for every $M$, and 
thus that $\Ee[M]=\Ee[M\times\RR]$.  

Entropy is related to MCF through Huisken's monotonicity formula \cite{Hui_mon}, which implies that entropy is non-increasing under the flow.
{
Moreover, given a MCF with initial surface $N$, 
if $M$ is a self-shrinker that arises (as discussed above)
by blowing-up around a singular point of the flow, then} 
\begin{equation}\label{lower_bound_by_sing}
F[M]=\Ee[M]\leq \Ee[N].
\end{equation}  
(The first equality holds for every self-shrinker, as was shown in \cite{CM_generic}). The main result of \cite{CM_generic} states that every self-shrinker $M$ other than the spheres and cylinders can be 
 perturbed to a hypersurface with lower entropy. 
Thus by~ \eqref{lower_bound_by_sing}, if we flow from the perturbed hypersurface, then
$M$ cannot appear as a singularity model. 
 
In \cite{Stone}, Stone calculated the $F$-areas of shrinking spheres (and thus also of shrinking
cylinders).  By ~\eqref{lower_bound_by_sing}, 
those $F$-areas are the entropies of round spheres.
According to those calculations,
\begin{equation}\label{monot_ent}
2> \Ee[\Ss^1]> \Ee[\Ss^2]> \dots,
    \quad\text{and}\quad 
    \lim_{n\to\infty} \Ee[\Ss^{n}] = \sqrt{2}.
\end{equation} 

In this paper, we prove:
\begin{theorem}\label{intro-main-theorem}
Suppose that $M\subset \RR^{m+1}$ is a codimension-one, smooth, closed self-shrinker 
with nontrivial $k^{\rm th}$ homology.  Then the entropy
of $M$ is greater than or equal to the entropy of a round $k$-sphere.
If equality holds, then $M$ is a round $k$-sphere in $\RR^{k+1}$.
\end{theorem}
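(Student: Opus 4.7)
The plan is to combine Colding--Minicozzi's framework of generic mean curvature flow with a topological argument based on the homology of the region enclosed by a perturbation of $M$. First, if $M$ is itself a round sphere $\Ss^n$, then $H_k(\Ss^n) \neq 0$ forces $k \in \{0,n\}$, and the stated inequality together with the equality case follows immediately from the strict monotonicity of $\Ee[\Ss^j]$ given in \eqref{monot_ent}. Otherwise $M$ is not a round sphere, and since $M$ is closed it is also not a cylinder; Colding--Minicozzi's entropy-stability theorem then produces an arbitrarily small smooth perturbation $\tilde M$ of $M$, isotopic to $M$ (so that $H_k(\tilde M) \cong H_k(M) \neq 0$), with $\Ee[\tilde M] < \Ee[M]$, and such that the mean curvature flow of $\tilde M$ develops only generic cylindrical singularities of the form $\Ss^{j_\alpha} \times \RR^{m-j_\alpha}$.

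The key topological claim I would establish is that the hypothesis $H_k(\tilde M) \neq 0$ forces at least one MCF singularity to satisfy $j_\alpha \leq k$. To this end, I would analyze the ``arrival-time'' function $T \colon \Omega \to \RR$ on the bounded region $\Omega$ enclosed by $\tilde M$, where $T(x)$ is the time at which $\tilde M_t$ passes through $x$. Under the genericity hypothesis $T$ is a Morse--Bott function whose critical submanifolds are precisely the singular sets of the MCF: an $\Ss^{j_\alpha} \times \RR^{m-j_\alpha}$ singularity contributes a Morse--Bott critical submanifold of dimension $m - j_\alpha$ and Morse--Bott index $j_\alpha + 1$. The Morse--Bott inequality for the pair $(\Omega, \tilde M)$, combined with Lefschetz duality $H_i(\Omega, \tilde M) \cong H^{m+1-i}(\Omega)$ and Poincar\'e duality for the closed orientable $\tilde M$, should then propagate the hypothesis ``$j_\alpha > k$ for every $\alpha$'' into the vanishing $H_k(\tilde M) = 0$, contradicting $H_k(\tilde M) \neq 0$.

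Once an MCF singularity with $j_\alpha \leq k$ is located, Huisken's monotonicity formula yields $\Ee[\tilde M] \geq F[\Ss^{j_\alpha} \times \RR^{m-j_\alpha}] = \Ee[\Ss^{j_\alpha}]$, and the strict monotonicity \eqref{monot_ent} of $\Ee[\Ss^j]$ then gives $\Ee[\Ss^{j_\alpha}] \geq \Ee[\Ss^k]$. Chaining these with the first step produces the strict inequality $\Ee[M] > \Ee[\tilde M] \geq \Ee[\Ss^k]$ whenever $M$ is not a round sphere, settling the inequality. For the equality case, strict inequality is excluded, so $M$ must itself be a round sphere $\Ss^n$; matching the entropies through the strict monotonicity of $\Ee[\Ss^j]$ forces $n = k$, hence $M = \Ss^k \subset \RR^{k+1}$.

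The main obstacle I expect is the topological argument. The Morse--Bott framework just described directly handles the case $k \geq m/2$: the vanishing of $H_i(\Omega, \tilde M)$ for $i \leq k+1$ translates via Lefschetz duality into vanishing of $H_j(\Omega)$ for $j \geq m-k$, and the long exact sequence of the pair together with Poincar\'e duality on $\tilde M$ then delivers $H_k(\tilde M) = 0$. When $k < m/2$, however, cylindrical singularities with $j_\alpha = m-k$ or $m-k-1$ can still affect $H_k(\tilde M)$ through the dual end of their associated surgery, and delivering the vanishing in that regime requires a more delicate combination of the Morse--Bott inequality, Lefschetz duality for $(\Omega, \tilde M)$, and Poincar\'e duality on $\tilde M$. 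Executing this homological bookkeeping cleanly is the technical heart of the argument.
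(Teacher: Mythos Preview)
Your proposal has a genuine gap at the perturbation step. You invoke ``Colding--Minicozzi's entropy-stability theorem'' to produce a perturbation $\tilde M$ with $\Ee[\tilde M] < \Ee[M]$ \emph{and} whose MCF develops only cylindrical singularities. The entropy-stability result in \cite{CM_generic} gives you the first conclusion, but not the second: it says only that a non-cylindrical shrinker can be perturbed to lower entropy, not that a single perturbation of the initial data makes every subsequent singularity cylindrical. That stronger statement is essentially Huisken's genericity conjecture, which is open. Without it, your arrival-time Morse--Bott analysis has no footing, since you cannot assume the singularity models are cylinders (and hence cannot assign indices $j_\alpha$ at all).

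The paper closes this gap by a different perturbation. Using \cite[Lemma~1.2]{CIMW}, one pushes $M$ slightly into one component of its complement so that the resulting region $K'$ is strictly $X$-mean-convex for $X(x)=x/2$; one then runs the \emph{renormalized} (i.e., $X$-) mean curvature flow. $X$-mean-convexity is preserved, and the regularity theory for mean-convex flow (extended to $X$-flows in \cite{HW_X_mean_convex}) guarantees that every singular point is of convex type, so every tangent flow is a cylinder $\Ss^j\times\RR^{m-j}$. The topological step is also organized differently: rather than Morse--Bott inequalities on an arrival-time function, the paper passes via Mayer--Vietoris and Hurewicz from $H_k(M)\ne 0$ to nontrivial $\pi_j$ of a complementary component for some $j\le k$, and then uses a Morse-type result (Theorem~\ref{morse}, from \cite{White_top, HW_Morse}) to locate a cylindrical singularity with index $\le k$. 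This avoids the dualities and the $k<m/2$ bookkeeping you flagged, and it works even when the relevant complementary region is unbounded, whereas your ``bounded region $\Omega$'' assumption need not hold.
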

The special case $k=m$ is the main result of~\cite{CIMW}.
The special cases $(k,m)=(1,2)$ and $(k,m)=(2,3)$ of Theorem~\ref{intro-main-theorem}
follow from recent work of Jacob Bernstein and Lu Wang.
Indeed, \cite{BW_top_prop} proves that
any smooth closed hypersurface in $\RR^3$ with entropy less than $\Ee(\Ss^1)$
is isotopic to $\Ss^2$, and \cite{BW_top_clo} proves
that any smooth closed hypersurface in $\RR^4$ with
entropy less  than $\Ee(\Ss^2)$ is diffeomorphic to $\Ss^3$.

\subsection*{Acknowledgments}
Or Hershkovits was partially supported by an AMS-Simons Travel Grant.
Brian White was partially supported by NSF grants~DMS-1404282 and~DMS-1711293.

\section{Proof of the Sharp Entropy Bounds}
\begin{theorem}\label{main-homotopy}
Suppose that $M\subset \RR^{m+1}$ is a codimension-one, smooth, closed self-shrinker,
and that
 one of the components of $\RR^{m+1}\setminus M$ has nontrivial $k^{\rm th}$ homotopy.
Then the entropy of $M$ is greater
than or equal to the entropy of a round $k$-sphere.
If equality holds, then $M$ is a round $k$-sphere in $\RR^{k+1}$.
\end{theorem}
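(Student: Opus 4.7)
The plan is to argue by contradiction and reduce, through weak mean curvature flow, to the $k=m$ case already established in~\cite{CIMW}. Assume that $M$ is a smooth closed self-shrinker with $\pi_k(\Omega)\neq 0$ for some component $\Omega$ of $\RR^{m+1}\setminus M$, and that $\Ee[M]<\Ee[\Ss^k]$; I aim at a contradiction, and treat the equality case afterwards by a separate rigidity argument.

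First, using the strict inequality and the continuity of entropy under small smooth perturbations of closed hypersurfaces, I would choose a smooth closed hypersurface $\widetilde{M}$ isotopic to $M$ with $\Ee[\widetilde{M}]<\Ee[\Ss^k]$, so that the corresponding component $\widetilde{\Omega}$ still satisfies $\pi_k(\widetilde{\Omega})\neq 0$. Then I would run Ilmanen's weak (level-set / Brakke) mean curvature flow $\{\widetilde{M}_t\}_{t\ge 0}$ starting from $\widetilde{M}$. By Huisken's monotonicity, the entropy stays strictly below $\Ee[\Ss^k]$ throughout, and since $\widetilde{M}$ is compact, the flow becomes extinct in finite time. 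In particular, there is a first time $T^{*}>0$ at which $\pi_k(\widetilde{\Omega}_t)$ becomes trivial, and this topological change is witnessed at some spacetime singular point $(x_0,T^{*})$ whose tangent flow is a smooth self-shrinker $M'$ with $\Ee[M']\le\Ee[\widetilde{M}]<\Ee[\Ss^k]$.

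The heart of the argument is then to extract from the topology change a structural statement about $M'$. The destruction of a nontrivial $\pi_k$-class at $(x_0,T^{*})$ should force $M'$, after possibly splitting off Euclidean $\RR$-factors, to have a complementary component with nontrivial $\pi_{k'}$ for some $k'\le k$. If this reduction yields a closed shrinker in some $\RR^{m'+1}$, then the $k'=m'$ case of the theorem (i.e.,~\cite{CIMW}), or an induction on $m$, gives $\Ee[M']\ge\Ee[\Ss^{k'}]\ge\Ee[\Ss^k]$; if it yields a generalized cylinder $\Ss^j\times\RR^{m-j}$ with $j\le k$, then the monotonicity~\eqref{monot_ent} of sphere entropies gives $\Ee[M']=\Ee[\Ss^j]\ge\Ee[\Ss^k]$. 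Either way we contradict $\Ee[M']<\Ee[\Ss^k]$. The equality case then follows from rigidity in Huisken's monotonicity: equality forces the relevant tangent flow to be self-similar, and tracing back through the reduction pins $M$ down as an isometric copy of the round $k$-sphere inside an $\RR^{k+1}$-subspace, which in particular forces $k=m$.

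The main obstacle is the step just described: making precise sense, in the weak MCF framework, of which homotopy class is killed at which spacetime singular point, and then linking this topological event to the topology of the complement of the tangent shrinker $M'$. Forcing a nontrivial $\pi_{k'}$ with $k'\le k$ to appear in $M'$'s complement is the most delicate point; it is where the homotopical (rather than merely homological) hypothesis is genuinely used, and it requires a careful study of how the open region $\widetilde{\Omega}_t$ evolves across singularities of the weak flow. Handling the possibility that $M'$ is not a generalized cylinder but some more exotic low-entropy self-shrinker, and setting up a clean induction on the ambient dimension, is where the bulk of the new content beyond~\cite{CIMW} and~\cite{CM_generic} must be supplied.
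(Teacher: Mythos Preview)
Your proposal correctly identifies that a topology change along the flow must be witnessed at some singularity, but the step you yourself flag as ``the main obstacle'' is the entire content of the argument, and your outline does not supply the missing idea. For a generic perturbation $\widetilde M$ and the resulting general weak mean curvature flow, the tangent flow $M'$ at a singular spacetime point need not be a smooth hypersurface at all; it is a priori only a (possibly singular) self-shrinking Brakke flow, and even when smooth it need not be compact, so neither the induction on ambient dimension nor the appeal to~\cite{CIMW} can get started. More fundamentally, there is no known Morse-theoretic link, for general weak MCF, between the particular $\pi_k$-class being killed and the topology of the complement of the tangent shrinker; the heuristic ``destruction of $\pi_k$ forces nontrivial $\pi_{k'}$ in the complement of $M'$ for some $k'\le k$'' is precisely what has to be \emph{proved}, and it is false without further structure.

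The idea you are missing is that, since $M$ is itself a self-shrinker, one can perturb it not merely to keep the entropy low but to make the enclosed region \emph{strictly $X$-mean-convex} for $X(x)=x/2$; this is the content of \cite{CIMW}*{Lemma~1.2}, which applies whenever $M$ is not a round sphere and simultaneously gives the strict drop $\Ee(M')<\Ee(M)$. Running the renormalized (i.e.\ $X$-) mean curvature flow from $M'$, one is then in the mean-convex regime, where the full structure theory applies: every singularity is of convex type with a cylindrical tangent flow $\Ss^j\times\RR^{m-j}$ (Theorem~\ref{X-flow}), and a genuine Morse-theoretic statement (Theorem~\ref{morse}) guarantees that the death of a nontrivial $\pi_k$-class forces such a singularity with $j\le k$. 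This yields $\Ee(M)>\Ee(M')\ge\Ee(\Ss^j)\ge\Ee(\Ss^k)$ directly, with no induction and no exotic shrinkers to handle. The equality case also comes for free: the strict inequality $\Ee(M')<\Ee(M)$ from the perturbation lemma needs only that $M$ is not a round sphere.
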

Before proving Theorem \ref{main-homotopy}, {we} show {that} it implies our main theorem.
\begin{proof}[Proof of Theorem \ref{intro-main-theorem}]
By Mayer-Vietoris, one of the components of the complement has nontrivial $k^{\rm th}$ homology.
By the Hurewicz~Theorem, that component has nontrivial $j^{\rm th}$ homotopy for some $j\le k$.
By Theorem~\ref{main-homotopy},
\[
    \Ee(M) \ge \Ee(\Ss^j),
\]
with equality if and only if $M$ is a round $j$-sphere in $\RR^{j+1}$.
The result follows immediately since $\Ee(\Ss^j)> \Ee(\Ss^k)$ for $j< k$ by~\eqref{monot_ent}.
\end{proof}
\begin{proof}[Proof of Theorem~\ref{main-homotopy}]
Consider the vectorfield
\begin{align*}
&X: \RR^{m+1}\to\RR^{m+1}, \\
&X(x) = x/2.
\end{align*}
We say that a region $K$ of $\RR^{m+1}$ with smooth boundary is strictly {\bf $X$-mean-convex}
if $H + X^\perp$ is nonzero and points into $K$ at each point 
of $\partial K$, where $H$ is the mean curvature
vector of $\partial K$.
Let $\Omega$ be a component of $\RR^{m+1}\setminus M$ such that $\RR^{m+1}\setminus \Omega$
has nontrivial $k^{\rm th}$ homotopy.  
We may suppose that $M$ is not a round sphere (otherwise the result is trivially true.)
By \cite[Lemma 1.2]{CIMW}, we can deform $M$ by pushing it slightly into $\Omega$
to get a surface $M'\subset \Omega$ such that
\begin{equation}\label{E-E'-entropy}
    \Ee(M') < \Ee(M)
\end{equation}
and such that 
 $K'$ is strictly $X$-mean-convex, where $K'$
is the closure of the component of 
$\RR^{m+1}\setminus M'$ that is
contained in $\Omega$.
Now as $M'$ is a smooth hypersurface, we can let it evolve for short time by $X$-mean curvature flow, i.e.,
with normal velocity $H+X^\perp$. 
Since $H+X^\perp$ points into $K'$, the surface immediately moves into the interior of $K'$. 
In fact, as explained in Section~\ref{prelim} (see Definition~\ref{weak_set_def} 
and Theorem~\ref{X-flow}), we can { extend the flow to all $t\ge 0$ 
(in particular, past singularities) by  letting
\[
   t\in [0,\infty)\mapsto M'(t)
\]
be the weak $X$-mean-curvature flow with $M'(0)=M'$.}  
For the particular vectorfield $X(x)=x/2$ we are using, $X$-mean-curvature
flow is also called {\bf renormalized mean curvature flow}: 
it differs from the ordinary mean curvature flow by a spacetime change-of-coordinates.  
To be precise, given our weak $X$-mean curvature flow $M'(\cdot)$,
the flow
\begin{equation}\label{tilde-M-flow}
  \tilde M: t\in [-1,0) \mapsto \tilde M(t) =  \sqrt{-t} M'(-\log|t|).
\end{equation}
is a weak mean curvature flow in $\RR^{m+1}$ with initial surface $\tilde M(-1)=M'$.
This is because  \eqref{tilde-M-flow} transforms smooth $X$-mean curvature flows to smooth
mean curvature flows, and hence weak $X$-mean curvature flows to weak mean curvature flows,
since the weak flows are defined by avoidance with smooth flows.
Note also that Huisken's monotonicity formula implies a modified monotonicity for $\tilde M(\cdot)$,
and that existence of tangent flows to $M'(\cdot)$ implies existence of tangent flows to $M'(\cdot)$.
Indeed the tangent flows to $M'(\cdot)$ at a specified spacetime point are the same as the tangent
flows to $\tilde M(\cdot)$ at the corresponding spacetime point.

Just as in the mean-convex setting, we can think of $t\mapsto M'(t)$ as a flow of measures
(see~Theorem~\ref{X-flow}).
Since entropy decreases under the flow $t\mapsto \tilde M(t)$, we see that
it also decreases under the renormalized 
flow $t\mapsto M'(t)$.
Consequently, if $\Theta$ is the Gauss density at a spacetime point of the flow $t\mapsto M'(t)$, then
\begin{equation}\label{gauss-density-fact}
   \Theta \le \Ee(M'(0)) = \Ee(M').
\end{equation}
Now let 
\[
   t\in [0,\infty)\mapsto K'(t).
\]
be the weak $X$-mean-curvature flow with $K'(0)=K'$ (see Definition~\ref{weak_set_def}). 
By Theorem~\ref{X-flow},
\[
\partial K'(t)=M'(t).
\]
Since $\RR^{m+1}\setminus K'$ has nontrivial $k^{\rm th}$ homotopy,
there is a continuous map $F: \partial \BB^{k+1}\to \RR^{m+1}\setminus K'$
such that $F$ is homotopically nontrivial in $\RR^{m+1}\setminus K'$.
Extend $F$ to a continuous map
\[
   F: \overline{\BB^{k+1}}\to \RR^{m+1}.
\]

By Theorem~\ref{clearing-out-theorem}, $F(\BB^{k+1})\cap K'(T)=\emptyset$ for $T$ sufficiently large. 
Since $F|\partial \BB^{k+1}$ is homotopically nontrivial in $\RR^{m+1}\setminus K'(0)$
and homotopically trivial in $\RR^{m+1}\setminus K'(T)$, the flow must be singular 
at one or more intermediate times.
In fact, the $X$-mean-convexity implies more (see Theorem~\ref{morse}):
there is a $t\in (0,T)$ and
an $x\in M(t)$ such that the tangent flow at $(x,t)$ is a shrinking $\Ss^j\times\RR^{m-j}$ for some $j \le k$.
Consequently, the Gauss density $\Theta$ at that point is
\[ 
  \Theta = \Ee(\Ss^j\times\RR^{m-j}) = \Ee(\Ss^j).
\]
Hence by~\eqref{monot_ent}, \eqref{E-E'-entropy}, and~\eqref{gauss-density-fact}, 
\[
  \Ee(\Ss^k) \le \Ee(\Ss^j) = \Theta \le \Ee(M') < \Ee(M).
\]
\end{proof}

\section{Motion by Mean Curvature Plus an Ambient Vectorfield}\label{prelim}

In this section we define weak $X$-mean curvature flow of closed sets,
and we state precisely the properties of the flow that were used in the proof of 
Theorem~\ref{intro-main-theorem}.

The following definition is an adaptation of the ones in \cite{White_top_weak ,Ilm_mani}.

\begin{definition}\label{weak_set_def}
Suppose that $K$ is a closed subset of $\RR^{m+1}$ and that $X$ is a smooth
vectorfield on $\RR^{m+1}$.  Let $\Kk$ be the largest closed subset of $\RR^{m+1}\times [0,\infty)$
such that
\begin{enumerate}[\upshape \quad(1)]
\item\label{initial-condition-item} $K(0)=K$, and
\item\label{avoidance-item} If $t\in [a,b]\subset [0,\infty) \mapsto \Delta(t)$ 
is a $X$-mean-curvature flow of smooth, compact hypersurfaces with $\Delta(a)$ disjoint from $K(a)$,
then $\Delta(t)$ is disjoint from $K(t)$ for all $t\in [a,b]$,
\end{enumerate}
where
\[
   K(t):=\{x\in\RR^{m+1}: (x,t)\in \Kk\}.
\]
We say that
\[
  t\in [0,\infty)\mapsto K(t)
\]
is the {\bf weak $X$-mean-curvature-flow} (or simply the {\bf weak $X$-flow}) starting from $K$.
\end{definition}

The largest set $\Kk$ exists because the closure of the union of all sets $\Kk$
having properties (1) and (2) also has those properties.

For a self-contained treatment of weak $X$-flows, see~\cite{HW-avoidance}.
(In that paper any closed subset of $\mathcal{K}$ satisfying~\eqref{initial-condition-item} and~\eqref{avoidance-item}
is called a {\bf weak $X$-flow} starting from $K$, and the largest one is called {\bf the biggest $X$-flow} starting
from $K$.  In this paper, the only weak $X$-flow starting from $K$ that we need is the biggest one,
and we write ``the weak $X$-flow" rather than ``the biggest $X$-flow''.)

The following theorem lists the main properties
of weak $X$-flow of $X$-mean-convex regions.

\begin{theorem}\label{X-flow}
Suppose that $K$ is a closed region in $\RR^{m+1}$ with smooth, compact boundary.
Suppose that $X$ is a smooth vector field on $\RR^{m+1}$ such that
\[
  \sup_x \frac{|X(x)|}{|x|+1} < \infty.  \tag{*}
\]
and such that at each point of $\partial K$, the vector $\overrightarrow{H}+X^\perp$ is nonzero
and points into $K$.
Let $t\mapsto K(t)$ and $t\mapsto M(t)$ be 
the weak-$X$-flows starting from $K$ and from $\partial K$.
Then 
\begin{enumerate}[\upshape \quad(1)]
\item\label{subset-item} $K(t_2)\subset \mathrm{Int}(K(t_1))$ 
     whenever $0\leq t_1<t_2<\infty$. 
\item\label{boundary-item} $M(t)=\partial K(t)$ for each $t<\infty$.
\item\label{compact-item} $M(t)$ is compact for each $t<\infty$.
\item\label{unit-regular-item} $t\in [0,\infty)\mapsto \Hh^m\llcorner M(t)$ defines a unit-regular integral $X$-brakke flow.
\item\label{parabolic-item} The flow $t\mapsto M(t)$ is smooth away from 
a closed set of parabolic Hausdorff dimension $(m-1)$ in spacetime.
\item\label{convex-type-item} The singular points of the flow $t\mapsto M(t)$ are of convex type.
\end{enumerate}
\end{theorem}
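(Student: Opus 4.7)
The plan is to run the mean-convex mean curvature flow program of White and Haslhofer--Kleiner in the $X$-mean-convex setting. The essential input those proofs rely on is precisely the avoidance axiom in Definition~\ref{weak_set_def} combined with strict mean-convexity at the initial time; with $X$-avoidance and strict $X$-mean-convexity at hand, nearly all their arguments go through after routine modifications to accommodate the lower-order terms coming from $X$. The growth hypothesis $(*)$ is used only to guarantee that the flow is confined to a bounded region for each finite time, so that large expanding spheres can serve as outer barriers.

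For properties (1), (2), and (3) I would argue simultaneously by constructing smooth inner and outer barriers. Since $\partial K$ is smooth and compact and $\vec H+X^\perp$ points strictly into $K$ at each boundary point, the parabolic PDE for smooth $X$-MCF has a short-time solution $t\mapsto M^s(t)$ starting from $\partial K$ that immediately moves into $\mathrm{Int}(K)$. The avoidance axiom then sandwiches the weak flow between $K$ and the inner region bounded by $M^s(t)$, giving $M(t)=\partial K(t)$ at small times. Compactness of $M(t)$ follows by choosing a large smooth expanding barrier enclosing $K$, which is possible by the growth bound $(*)$. The strict nesting (1) follows by iterating: smooth $X$-MCF preserves strict $X$-mean-convexity for a short time, so one covers $[0,\infty)$ by short time intervals and propagates the inner barrier construction forward.

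For (4), I would use Ilmanen's elliptic regularization, adapted to $X$-MCF by inserting the $X$-term into the relevant translator equation, to obtain the $X$-Brakke flow as the limit of smooth $X$-MCFs from perturbed initial data. Strict $X$-mean-convexity makes the approximating flows nested, and hence the limit measures have unit density on their support and are integer rectifiable, yielding unit-regularity. Property (5) then follows from stratification and dimension reduction as in White's partial regularity theorem for mean-convex MCF, with Huisken's monotonicity replaced by its $X$-modified analog (for $X(x)=x/2$ the change of variables~\eqref{tilde-M-flow} reduces to the usual monotonicity, and in general a weighted monotonicity formula holds by direct computation).

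The main analytical obstacle is (6), the convex-type property of tangent flows. In classical mean-convex MCF this comes from the Huisken--Sinestrari/White convexity estimate: for every $\eta>0$ there is a $C_\eta<\infty$ such that $A \ge -\eta H\cdot\mathrm{Id} - C_\eta$ along the flow. I would attempt the analogous estimate in the $X$-setting by running the Hamilton-type evolution equations for $H$ and for $\lambda_{\min}/H$, treating the extra terms coming from $\nabla X$ and $X^\perp$ as lower-order perturbations. Strict $X$-mean-convexity gives a positive lower bound on $H+X^\perp\cdot\nu$ on compact sets, and the smoothness of $X$ together with the uniform compactness of the $K(t)$ bounds all $X$-derivatives uniformly on compact time intervals, so a Stampacchia iteration absorbs those extra terms. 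Once this estimate is in place, convex type of tangent flows follows by taking rescaled limits at singular spacetime points, exactly as in the classical mean-convex case.
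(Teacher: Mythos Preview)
Your outline matches the paper's approach: the paper does not prove Theorem~\ref{X-flow} in the body but states that it is obtained by modifying the mean-convex MCF arguments of White and Haslhofer--Hershkovits to accommodate the ambient vectorfield $X$, deferring Assertions~(1)--(3) to~\cite{HW-avoidance} and Assertions~(4)--(6) to~\cite{HW_X_mean_convex}. Your sketch---short-time smooth $X$-flow as an inner barrier and large spheres (via~$(*)$) as outer barriers for~(1)--(3), elliptic regularization for~(4), White's stratification with a modified monotonicity for~(5), and a Stampacchia-type convexity estimate treating the $X$-terms as lower order for~(6)---is precisely the scheme those companion papers carry out.
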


A spacetime singular point $(x,t)$ is said to be of {\bf convex type}
provided the following holds: if $x_i\in M(t_i)$ are regular points with $(x_i,t_i)\to (x,t)$, then the mean curvature $h_i$
of $M(t_i)$ at $x_i$ tends to infinity, and $h_i(M(t_i)-x_i)$ converges smoothly
(after passing to a subsequence)
to a convex hypersurface $M'$ of $\Tan(N,x_i)$.
(Here we regard $N$ as isometrically embedded in some Euclidean space.)

For definition of ``$X$-brakke flow", see~\cite{HW-avoidance}*{\S12} or~\cite{HW_X_mean_convex}.

The hypothesis~\thetag{*} guarantees that compactness is preserved, i.e., that $\cup_{t\in [0,T]}M(t)$ is compact for finite $T$.
See~\cite{HW-avoidance}*{Theorem~23}.
More generally, in smooth Riemannian manifolds and without the hypothesis~\thetag{*},
the conclusions of the theorem hold as long as $\cup_{t\in [0,T]}M(t)$ is compact.

In the case of Euclidean space with no vectorfield (i.e., $X=0$), 
Theorem~\ref{X-flow} was proved in~\cite{White_size,White_nature,White_sub}.
That work was extended to compact $K$ in Riemannian manifolds (still with $X\equiv 0$)
by Haslhofer and Hershkovits~\cite{HH}.
The proof of Theorem~\ref{X-flow} is a modification of the proofs in those papers.
See~\cite{HW-avoidance} for proofs of Assertions~\eqref{subset-item}, 
   \eqref{boundary-item}, and~\eqref{compact-item}, 
   and~\cite{HW_X_mean_convex} for proofs of Assertions \eqref{unit-regular-item},
    \eqref{parabolic-item}, and~\eqref{convex-type-item}.

\begin{remark}
Although the proof of Theorem~\ref{intro-main-theorem} only used the vector field $X=x/2$, in order to prove Theorem \ref{X-flow} for this particular vector field when $K$ is unbounded (which is key to identifying interior topology in Theorem~\ref{intro-main-theorem}), one is forced to consider more general vector fields. 
Thus, from the point of view of this current paper,
it is (indirectly) essential that  
the analysis in \cite{HW-avoidance, HW_X_mean_convex} holds
for arbitrary vector fields $X$ satisfying ~\thetag{*}, and not just for $X=x/2$.
\end{remark}

\begin{theorem}\label{morse}
Suppose in Theorem~\ref{X-flow} that
\[
   F: \partial{\BB^{k+1}} \to \RR^{m+1}\setminus K
\]
is homotopically nontrivial in $\RR^{m+1}\setminus K$ and homotopically trivial in
 $\RR^{m+1}\setminus K(T)$.   Then there is a $t \in (0,T)$ and
  a singular point $x\in M(t)$ such that 
  the tangent flow at $(x,t)$ is a shrinking $\Ss^j\times \RR^{m-j}$ for some
 $j$ with $1\le j\le k$.
 \end{theorem}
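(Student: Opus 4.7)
My plan is to argue by contradiction: suppose every singular spacetime point $(x,t)$ with $t \in (0,T)$ has tangent flow of the form $\Ss^j \times \RR^{m-j}$ with $j > k$. I would show this forces $\pi_k(\RR^{m+1} \setminus K(t))$ to be preserved along the flow, contradicting the hypothesis that $F|_{\partial \BB^{k+1}}$ is homotopically trivial in $\RR^{m+1} \setminus K(T)$.

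The first ingredient is the classification of tangent flows at singular points. By the convex-type singularity assertion of Theorem~\ref{X-flow}, every singularity of the flow $t \mapsto M(t)$ is, after blow-up, modeled on a weakly convex ancient $X$-mean-curvature flow. Combining this with Huisken's monotonicity~\cite{Hui_mon} and the rigidity of weakly convex self-shrinkers yields that every tangent flow at a singular point is a shrinking cylinder $\Ss^j \times \RR^{m-j}$ with $1 \le j \le m$; this is the analogue in the $X$-MCF setting of the tangent-flow classification for ordinary mean-convex MCF used in~\cite{White_nature, HH}.

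The second ingredient is a Morse-theoretic tracking of the topology of the open set $\RR^{m+1} \setminus K(t)$. Between singular times, the flow is smooth by Theorem~\ref{X-flow}, so $K(t)$ evolves by ambient isotopy and the complement has constant homotopy type. At a singular time with tangent flow $\Ss^j \times \RR^{m-j}$, the convex-type regularity implies that just before the singular time $K(t)$ locally looks like a thin solid cylinder $\BB^{j+1} \times \BB^{m-j}$ that is pinched off; removing this piece from $K$ is the same as attaching a $(j+1)$-handle to the complement, and up to homotopy this is attaching a $(j+1)$-cell. Since attaching a $(j+1)$-cell preserves $\pi_i$ for all $i < j$, the contradictory assumption $j > k$ would force $\pi_k$ of the complement to be preserved at every singular time. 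Then the inclusion $\RR^{m+1} \setminus K(0) \hookrightarrow \RR^{m+1} \setminus K(T)$ induces an isomorphism on $\pi_k$, contradicting that $[F|_{\partial \BB^{k+1}}]$ is nontrivial in $\pi_k$ at $t=0$ and trivial at $t=T$; the lower bound $j \ge 1$ is automatic from the classification.

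The main obstacle is making the Morse-theoretic bookkeeping rigorous in the weak-flow setting: the singular set at a given time need not be discrete, and the spacetime singular set is only controlled to have parabolic Hausdorff dimension at most $m-1$. One must verify that each connected component of the singular set in spacetime contributes independently as a handle attachment governed by its tangent-flow type, without interference from other singular events occurring at the same time. This is an adaptation to the $X$-mean-convex setting of White's Morse theory for ordinary mean-convex MCF, and crucially uses the convex-type regularity of Theorem~\ref{X-flow} to localize the analysis near each singular point to a standard handle-attachment picture.
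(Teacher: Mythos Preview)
The paper does not give its own proof of this theorem; it simply records that it is a special case of Theorem~4.4 in~\cite{White_top} and points to~\cite{HW_Morse} for a simpler Morse-theoretic argument. Your plan is exactly that Morse-theoretic route, and the outline is sound: convex-type regularity forces every tangent flow at a singularity to be a shrinking cylinder $\Ss^j\times\RR^{m-j}$, a cylindrical singularity of index $j$ changes the complement $\RR^{m+1}\setminus K(t)$ up to homotopy by attaching a $(j{+}1)$-cell, and cell attachments with $j>k$ leave $\pi_k$ intact, producing the desired contradiction.

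You have also put your finger on the genuine technical issue. The dimension bound in Theorem~\ref{X-flow} only says the spacetime singular set has parabolic Hausdorff dimension at most $m-1$; for $m\ge 2$ this does not yield isolated singular times, so your phrase ``between singular times the flow is an ambient isotopy'' is a heuristic rather than a proof step. Converting the local cylindrical model at each singular point into a global cell-attachment statement for $\pi_k$ of the complement---in the presence of a possibly large singular set at each time---is precisely the content of the cited references, and requires either the stratification analysis of~\cite{White_top} or the canonical-neighborhood style arguments underlying~\cite{HW_Morse}. Your proposal is thus a correct and honest sketch of one of the two approaches the paper invokes, with the hard step correctly flagged but not carried out.
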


 Theorem~\ref{morse} is a special case of Theorem 4.4 in \cite{White_top}.   
 See \cite{HW_Morse} for a simpler, Morse-theoretic proof of Theorem~\ref{morse}.
 
\begin{theorem}[Clearing Out Theorem]\label{clearing-out-theorem} 
Suppose in Theorem~\ref{X-flow} that $X(x)=x/2$.  Then 
$
    \dist(0,K(t)) \to \infty
$
as $t\to\infty$.
\end{theorem}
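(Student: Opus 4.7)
The plan is to use the equivalence, established in the proof of Theorem~\ref{main-homotopy}, between the renormalized mean curvature flow (i.e., the $X$-mean-curvature flow with $X(x)=x/2$) and ordinary mean curvature flow, via the spacetime rescaling $\tilde K(\tau):=\sqrt{-\tau}\,K(-\log|\tau|)$ for $\tau\in[-1,0)$. Under this rescaling, $\tau\mapsto\tilde K(\tau)$ is a weak ordinary MCF with initial surface $\tilde K(-1)=K$ and compact boundary, and
\[
\dist(0,K(s))\;=\;e^{s/2}\,\dist(0,\tilde K(-e^{-s})).
\]
So the claim reduces to showing $\dist(0,\tilde K(\tau))/\sqrt{-\tau}\to\infty$ as $\tau\to 0^-$.

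Since $\tilde M(-1)=\partial K$ is a smooth compact hypersurface, the weak ordinary MCF $\tilde M(\tau)$ becomes extinct at some finite time $\tau^*<\infty$. If $\tau^*<0$ then the renormalized flow is empty for $s>-\log|\tau^*|$ and the conclusion is trivial. The substantive case is $\tau^*\ge 0$, in which case $\tilde M(\tau)$ persists throughout $[-1,0)$ and I would apply the avoidance principle for the weak ordinary MCF against shrinking spheres centered at the origin: for any $\tau_0\in[-1,0)$, setting $r_0:=\dist(0,\tilde K(\tau_0))$, avoidance with $S_{r(\tau)}(0)$, $r(\tau):=\sqrt{r_0^2-2m(\tau-\tau_0)}$, yields $\dist(0,\tilde K(\tau))\ge r(\tau)$ on $[\tau_0,\tau_0+r_0^2/(2m))$. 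Translating through the rescaling: if at some $s_0\ge 0$ one has $\dist(0,K(s_0))>\sqrt{2m}$, then taking $\tau_0=-e^{-s_0}$ gives a uniform positive lower bound on $\dist(0,\tilde K(\tau))$ for $\tau$ up to $0$, and hence $\dist(0,K(s))=e^{s/2}\dist(0,\tilde K(-e^{-s}))\to\infty$.

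It therefore remains to produce some $s_0\ge 0$ with $K(s_0)\cap\overline{\BB_{\sqrt{2m}}(0)}=\emptyset$. I would argue by contradiction: if $K(s)\cap\overline{\BB_{\sqrt{2m}}(0)}\ne\emptyset$ for every $s\ge 0$, then since $\{K(s)\}$ is nested decreasing (Theorem~\ref{X-flow}(1)), the compact intersection $K_\infty:=\bigcap_{s\ge 0}K(s)\cap\overline{\BB_{\sqrt{2m}}(0)}$ is nonempty. Translating to the ordinary MCF picture, such persistence produces a nontrivial tangent flow of $\tilde K$ at $(0,0)$ whose support is a self-shrinker. By the convex-type classification of singularities of the $X$-Brakke flow (Theorem~\ref{X-flow}(6)) combined with Huisken's classification of convex self-shrinkers, this self-shrinker must be either the round sphere $\Ss^m(\sqrt{2m})$ or a round cylinder $\Ss^j(\sqrt{2j})\times\RR^{m-j}$, and strict $X$-mean-convexity of $\partial K$ — propagated along the weak flow via the framework of~\cite{HW-avoidance, HW_X_mean_convex} — rules out asymptotic convergence to any such stationary self-shrinker, yielding the required contradiction. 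The main obstacle is carrying out this blow-down extraction and the strict-mean-convexity propagation rigorously within the $X$-Brakke flow framework; once those ingredients are in place, the rigidity step is standard.
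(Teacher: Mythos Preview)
Your first reduction is fine: once $\dist(0,K(s_0))>\sqrt{2m}$ for some $s_0$, sphere comparison in the ordinary-MCF picture gives a uniform positive lower bound on $\dist(0,\tilde K(\tau))$ as $\tau\to 0^-$, and the rescaling turns this into $\dist(0,K(s))\to\infty$. The gap is in producing such an $s_0$. The rigidity you invoke---that strict $X$-mean-convexity, propagated along the flow, forbids $M(s)$ from settling onto a self-shrinker---is not actually available: a strictly nested family $K(s)$ can perfectly well decrease to a nonempty limit whose boundary satisfies $H+X^\perp=0$, just as a strictly decreasing sequence of reals can have a finite limit. Moreover, the convex-type classification in Theorem~\ref{X-flow}\eqref{convex-type-item} concerns finite-time spacetime singularities of the renormalized flow, not the $s\to\infty$ blow-down you need; and the associated ordinary flow $\tilde M$ is not mean-convex in the usual sense (only $H+x/2$, not $H$, points inward at $\tau=-1$), so the standard mean-convex tangent-flow package does not apply directly at $(0,0)$. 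You flag this obstacle yourself, but it is the entire content of the step, not a routine detail.

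The paper bypasses all of this with a short avoidance argument and never introduces the threshold $\sqrt{2m}$ or any blow-down. The key observation is that for any two renormalized MCFs $M_1,M_2$ with $M_2(0)$ compact, the quantity $e^{-t/2}\dist(M_1(t),M_2(t))$ is nondecreasing (this is the ordinary avoidance principle transported through~\eqref{tilde-M-flow}). Apply it with $M_1(t)=K(\tau+t)$ and $M_2(t)=M(t)=\partial K(t)$ for any fixed $\tau>0$: both are renormalized flows, and $\dist(K(\tau),M(0))>0$ by the strict nesting of Theorem~\ref{X-flow}\eqref{subset-item}. Hence $\dist(K(\tau+t),M(t))\ge e^{t/2}\dist(K(\tau),M(0))\to\infty$, and since $K(\tau+t)\subset K(t)\subset K(0)$ this forces $\dist(K(\tau+t),\partial K(0))\to\infty$ as well. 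As $\partial K(0)$ is nonempty and compact, $\dist(0,K(t))\to\infty$ follows by the triangle inequality. No tangent-flow extraction or classification is needed.
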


It is possible that $K(t)$ vanishes in finite time.  Theorem~\ref{clearing-out-theorem}
includes that case: if $K(t)=\emptyset$, then $\dist(0,K(t))=\infty$.

\begin{proof}
If $M_1(\cdot)$ and $M_2(\cdot)$
are weak MCFs in Euclidean space with $M_2(0)$ compact, 
then
\[
   \dist(M_1(t), M_2(t)) := \min_{x\in M_1(t), \, y\in M_2(t)} |x-y|
\]
is a non-decreasing function of $t$.
For MCF of smooth hypersurfaces, this is the standard avoidance principle.
The proof of~\cite[Lemma 4E]{Ilm_mani} gives the general result. 
It follows immediately from the transformation formula~\eqref{tilde-M-flow} 
that if $M_1(\cdot)$ and $M_2(\cdot)$
are renormalized weak MCFs with $M_2(0)$ compact, then
\[
  t \mapsto e^{-t/2}\dist(M_1(t),M_2(t))
\]
is non-decreasing.

Fix a $\tau>0$.  Then $t\mapsto K(\tau+t)$ and $t\mapsto M(t)$ are RMCFs,
so
\begin{equation*}
   \text{$e^{-t/2} \dist(K(\tau+t), M(t))$ is non-decreasing in $t$.}
 \end{equation*}
Since $K(\tau+t)\subset K(t)\subset K(0)$ and since $M(\cdot)= \partial K(\cdot)$, 
we have
\begin{align*}
   \dist(K(\tau+t), M(0)) 
   &\ge \dist(K(\tau+t), M(t))   \\
   &\ge e^{t/2}\dist(K(\tau),M(0)),
\end{align*}
which tends to $\infty$ as $t\to\infty$.
(Note that $\dist(K(\tau), M(0))>0$ since $K(\tau)$ lies in the interior of $K(0)$
 and since $M(0)=\partial K(0)$.)
\end{proof}
\bibliography{HershkovitsWhite}
\bibliographystyle{alpha}
  \end{document}